\title[Conjugation-invariant norms on ${[ B_\infty,B_\infty ]}$]{Conjugation-invariant norms on the commutator subgroup of the infinite braid group}
\author{Mitsuaki Kimura}
\address{Graduate School of Mathematical Sciences, the University of Tokyo
3-8-1 Komaba, Meguro-ku, Tokyo, 153-8914, Japan}
\email{mkimura@ms.u-tokyo.ac.jp}
\date{}
\chardef\bslash=`\\ 
\newtheorem{thm}{Theorem}[section]
\newtheorem{cor}[thm]{Corollary}
\newtheorem{lem}[thm]{Lemma}
\newtheorem{prob}[thm]{Problem}
\newtheorem{prop}[thm]{Proposition}
\theoremstyle{definition}
\newtheorem{defn}[thm]{Definition}
\theoremstyle{remark}
\DeclareMathOperator{\cl}{cl}
\newcommand{\eval}[2][\right]{\relax
  \ifx#1\right\relax \left.\fi#2#1\rvert}
\begin{document}

\maketitle

\begin{abstract}
In this paper, we give a proof of the result of Brandenbursky and K\c{e}dra which says that the commutator subgroup of the infinite braid group admits stably unbounded norms. Moreover, we observe the norms which we constructed are equivalent to the biinvariant word norm studied by Brandenbursky and K\c{e}dra.

\end{abstract}

\section{Introduction}

In \cite{bip}, Burago, Ivanov, and Polterovich 
introduced the notion of conjugation-invariant norms and asked several problems. One of them is as follows:

\begin{prob}[\cite{bip}]\label{bip}
Does there exists a perfect group $G$ which satisfies the following conditions?
\begin{enumerate}
\item The commutator length
is stably bounded on $[G,G]$.
\item $G$ admits a stably unbounded norm.
\end{enumerate}
\end{prob}

\noindent
For the definitions on conjugation-invariant norms, see \cite{bip}. 
It is known that such groups exist. 
Brandenbursky and K\c{e}dra \cite{bk} proved the following theorem:

\begin{thm}[\cite{bk}] \label{mainthm}
The commutator subgroup of the infinite braid group $[B_\infty,B_\infty]$ admits a stably unbounded norm.
\end{thm}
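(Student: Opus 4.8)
The plan is to realize the norm concretely as a biinvariant word norm and to detect its stable unboundedness by comparison with the quasimorphisms on the \emph{finite} braid groups. First I would record the algebra I need. Writing $e\colon B_\infty\to\mathbb{Z}$ for the exponent-sum homomorphism, one has $[B_\infty,B_\infty]=\ker e=\bigcup_n[B_n,B_n]$, and this group is perfect. Set $s_0=\sigma_1\sigma_2^{-1}$ and let $S$ be the set of all $B_\infty$-conjugates of $s_0^{\pm1}$. Since $s_0\in\ker e$, since every $\sigma_i\sigma_{i+1}^{-1}$ is conjugate to $s_0$, and since the $\sigma_i\sigma_{i+1}^{-1}$ generate $\ker e$ (one has $\sigma_i\sigma_{i+2}^{-1}=(\sigma_i\sigma_{i+1}^{-1})(\sigma_{i+1}\sigma_{i+2}^{-1})$, and so on), the set $S$ is a conjugation-invariant generating set of $[B_\infty,B_\infty]$. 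It therefore defines a conjugation-invariant word norm $\nu_S$, and the theorem reduces to exhibiting a single $g\in[B_\infty,B_\infty]$ with $\lim_k \nu_S(g^k)/k>0$.

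The tool I would use is the infinite-dimensional space $Q(B_m)$ of homogeneous quasimorphisms on a fixed finite braid group. A homogeneous quasimorphism is constant on conjugacy classes, so any $\phi\in Q(B_m)$ takes the single value $\phi(s_0)$ on every element of $S\cap B_m$. Fix $m\ge3$, an element $g\in[B_m,B_m]$ with $\scl_{B_m}(g)>0$, and (by Bavard duality in $B_m$) a quasimorphism $\phi\in Q(B_m)$ with $\phi(g)\neq0$; concretely one may take $\phi$ to be a Gambaudo--Ghys $\omega$-signature quasimorphism. If $g^k=x_1\cdots x_N$ with all $x_i\in S\cap B_m$, then the defect inequality $|\phi(x_1\cdots x_N)|\le \sum_i|\phi(x_i)|+(N-1)D(\phi)$ together with $\phi(g^k)=k\phi(g)$ gives $N\ge k|\phi(g)|/(|\phi(s_0)|+D(\phi))$. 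Thus the word length of $g^k$ with respect to the finite-stage generating set $S\cap B_m$ grows at least linearly in $k$.

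The hard part is that this estimate controls the wrong norm. Enlarging the generating set can only decrease the word norm, so $\nu_S\le\nu_{S\cap B_m}$, and the linear lower bound just obtained bounds $\nu_{S\cap B_m}$ from below, not $\nu_S$. The real difficulty --- and the entire content of the theorem --- is that $S$ permits conjugation by braids on arbitrarily many strands, while the finite-stage quasimorphisms do not assemble into a quasimorphism on $B_\infty$ (their defects are not uniformly bounded in the number of strands) and any homomorphism vanishes on the commutator subgroup, so there is no global quasimorphism to exploit. The heart of the argument must therefore be a quantitative no-spreading estimate: one has to show that shortening an $S$-expression of $g^k$ by distributing it over strands up to $B_M$ is impossible below linear order in $k$, by balancing the growth of $D(\phi)$ in $M$ against the cost, in number of letters, of actually engaging those extra strands (letters genuinely supported far out must eventually cancel, and arranging this cancellation is itself expensive). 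Controlling this trade-off, for which the choice of $S$ as conjugates of the single short element $s_0$ rather than of arbitrary commutators is essential, is where I expect all the work to lie.

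Granting such an estimate, the conclusion is quick. The same data package into an explicit conjugation-invariant norm $\nu$ on $[B_\infty,B_\infty]$ built from the finite-stage quasimorphisms, for which stable unboundedness on $g$ is transparent; comparing the two definitions yields constants with $\tfrac1C\,\nu\le\nu_S\le C\,\nu+C'$, so $\nu$ and the biinvariant word norm $\nu_S$ are equivalent. In particular $\nu_S$ is stably unbounded, which gives \thmref{mainthm}.
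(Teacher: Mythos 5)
Your proposal stops exactly where the proof has to begin, and you say so yourself: the ``quantitative no-spreading estimate'' that you defer is not a technical loose end but the entire theorem. Everything before it is either standard ($S$ normally generates $[B_\infty,B_\infty]$; Bavard duality in a fixed $B_m$ gives a linear lower bound for $\nu_{S\cap B_m}(g^k)$) or, as you correctly observe, bounds the wrong quantity, since $\nu_S\le\nu_{S\cap B_m}$. Worse, your own diagnosis shows that no refinement of this strategy can close the gap as stated: since $\scl$ vanishes on $[B_\infty,B_\infty]$ (this is why Problem~\ref{bip} is interesting here), Bavard duality implies there are \emph{no} nontrivial homogeneous quasimorphisms on $[B_\infty,B_\infty]$ at all, so any argument that ultimately rests on a defect inequality for a single $\phi$ against arbitrary $S$-expressions must fail. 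The ``balancing $D(\phi)$ in $M$ against the cost of engaging extra strands'' step is therefore not a computation you have postponed; it is a new idea you have not supplied, and as written the proposal does not prove the theorem.

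The paper escapes this trap by changing both the norm and the notion of quasimorphism rather than trying to force a global defect bound. It works with Kawasaki's relative notions: the norm shown to be stably unbounded is not a word norm like your $\nu_S$ but the $(\nu_n,p,q)$-commutator length $\cl_{\nu_n,p,q}$, where $\nu_n=q_{B_n}$ counts conjugates of elements of $B_n$; and the detecting function is the signature $\sigma:B_\infty\to\mathbb{R}$, which is not a quasimorphism on $B_\infty$ but is a \emph{$\nu_n$-quasimorphism}: the defect $|\sigma(\alpha\beta)-\sigma(\alpha)-\sigma(\beta)|$ is bounded by $n\cdot\min\{\nu_n(\alpha),\nu_n(\beta)\}$, proved geometrically by realizing $\widehat{\alpha\beta}$ from $\widehat{\alpha}\sqcup\widehat{\beta}$ by saddle moves, only $n$ of which can change the signature because the remaining strands are trivial. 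Kawasaki's Proposition~\ref{kaw} then converts the nontriviality of the stabilized signature directly into stable unboundedness of $\cl_{\nu_n,p,q}$, and Lemma~\ref{welldef} shows this norm is defined on all of $[B_\infty,B_\infty]$. If you want to salvage your plan, the way forward is to replace your missing estimate with this relative-quasimorphism mechanism (the comparison with the biinvariant word norm is then Theorem~\ref{equiv2}, proved separately via the extremal property).
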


Kawasaki \cite{kaw} also showed that
the commutator subgroup of ${\rm Symp}_c(\mathbb{R}^{2n})_0$ is such a group, 
where ${\rm Symp}_c(\mathbb{R}^{2n})_0$ is the group of symplectomorphisms with compact support isotopic to the identity of the standard symplectic space.
In this paper, we give a proof of Theorem\ref{mainthm} by using the idea of Kawasaki in \cite{kaw}. 
Kawasaki introduced $\nu$-quasimorphisms (or relative quasimorphisms)
and the $(\nu,p,q)$-commutator length $\cl_{\nu,p,q}$.
He proved that the existence of non-trivial $\nu$-quasimorphism implies stably unboundedness of $\cl_{\nu,p,q}$ (see Proposition \ref{kaw}). 

To give our proof of Theorem\ref{mainthm}, we construct stably unbounded norms $\cl_{\nu_n,p,q}$ on $[B_\infty, B_\infty]$ by observing that the signature of braids $\sigma : B_{\infty} \to \mathbb{R}$ is a $\nu_n$-quasimorphism in \S 2.1. 
In \S 2.2,  we study the property of the norms $\cl_{\nu_n,p,q}$ and prove the following theorem:

\begin{thm} \label{equiv2}
For any integer $n\geq 2$ and any real numbers $p \geq 1$ and $q \geq 1$, the norm $\cl_{\nu_n,p,q}$ is equivalent to the biinvariant word norm $\|\cdot\|$ (whose stably unboundedness is observed by Brandenbursky and K\c{e}dra).
\end{thm}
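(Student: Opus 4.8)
The plan is to prove the two inequalities that constitute equivalence of norms: there exist constants $C_1, C_2 > 0$ so that $C_1 \|g\| \leq \cl_{\nu_n,p,q}(g) \leq C_2 \|g\|$ for all $g \in [B_\infty, B_\infty]$. One direction should be essentially formal. The $(\nu,p,q)$-commutator length is defined through writing an element as a product of commutators and measuring the cost via the $\nu$-quasimorphism $\nu_n$; since the biinvariant word norm $\|\cdot\|$ counts (conjugacy classes of) generators needed to express $g$, and any single generator contributes a bounded amount to $\cl_{\nu_n,p,q}$, subadditivity and conjugation-invariance of both norms should immediately give an inequality of the form $\cl_{\nu_n,p,q}(g) \leq C_2 \|g\|$. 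The key input here is that $\cl_{\nu_n,p,q}$ evaluated on a commutator of standard generators (or on a fixed finite generating set for $[B_\infty,B_\infty]$) is uniformly bounded, which follows because each such element has bounded $\cl_{\nu_n,p,q}$ by definition.

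For the reverse inequality $C_1\|g\| \leq \cl_{\nu_n,p,q}(g)$, I would exploit the $\nu_n$-quasimorphism property of the signature $\sigma$. By Proposition \ref{kaw} (the stated result of Kawasaki), the existence of the nontrivial $\nu_n$-quasimorphism $\sigma$ gives a lower bound on $\cl_{\nu_n,p,q}$ in terms of the homogenization or the defect of $\sigma$; concretely one expects an estimate roughly of the form $\cl_{\nu_n,p,q}(g) \geq c\,|\sigma(g)|$ up to additive and multiplicative constants. The remaining task is then to bound the biinvariant word norm $\|g\|$ from above by a multiple of $|\sigma(g)|$ (plus possibly a bounded correction). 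This amounts to showing that $\sigma$ is Lipschitz with respect to $\|\cdot\|$ from below in the appropriate sense — i.e. $\|g\|$ cannot be large while $\sigma(g)$ stays small — which I expect to follow from Brandenbursky and K\c{e}dra's analysis of $\|\cdot\|$ together with the explicit behavior of the signature on generators.

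The main obstacle will be the reverse inequality, and specifically controlling $\|g\|$ by $\sigma$: a priori the biinvariant word norm could grow faster than any single homogeneous quasimorphism, so the argument must use particular structural features of $B_\infty$ and of the signature rather than soft quasimorphism estimates alone. I would look closely at how Brandenbursky and K\c{e}dra establish stable unboundedness of $\|\cdot\|$ — presumably they already produce a quasimorphism or a family of test elements realizing linear growth of $\|\cdot\|$ — and try to show that the signature $\sigma$ (or the family $\nu_n$) detects exactly the same growth, so that the two lower bounds match up to constants. If the same test elements simultaneously witness linear lower bounds for both $\|\cdot\|$ and $\cl_{\nu_n,p,q}$ via $\sigma$, then combining these with the two easy upper bounds (each norm bounded by a multiple of the other on generators) closes the loop and yields equivalence for every fixed $n \geq 2$, $p \geq 1$, $q \geq 1$, with constants independent of $g$.
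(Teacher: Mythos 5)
Your strategy for the direction $\cl_{\nu_n,p,q}(g)\leq C_2\|g\|$ is roughly in the spirit of the paper, but it is not as formal as you suggest: the generators counted by $\|\cdot\|$ are conjugates of $\sigma_1^{\pm1}$, which do not lie in $[B_\infty,B_\infty]$, so $\cl_{\nu_n,p,q}$ is not even defined on a single factor. The paper handles this by a telescoping rewriting $(\sigma_1^{\varepsilon_1})^{\alpha_1}\cdots(\sigma_1^{\varepsilon_k})^{\alpha_k}=[\alpha_1,\sigma_1^{\varepsilon_1}][\alpha_2,\sigma_1^{\varepsilon_2}]^{\ast}\cdots[\alpha_k,\sigma_1^{\varepsilon_k}]^{\ast}\sigma_1^{\varepsilon_1+\cdots+\varepsilon_k}$, where the last factor vanishes because $g$ has zero exponent sum, and then bounds each commutator $[\alpha_i,\sigma_1^{\pm1}]$ by a universal constant. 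That last bound is the key mechanism of the whole proof and it is missing from your proposal: since one can displace supports inside $B_\infty$, for $x$ supported on finitely many strands there is a braid $\Delta'_m$ (built from Brandenbursky--K\c{e}dra's argyle braids) with $x$ and $y^{\Delta'_m}$ commuting, and then the identity $[x,y]=[x,[y,\Delta'_m]]=(\Delta'_m)^{xy}(\Delta_m'^{-1})^{x}\Delta'_m(\Delta_m'^{-1})^{y}$ writes the commutator as a product of $4$ conjugates of a fixed element. This bounds \emph{any} $B_\infty$-conjugation-invariant norm on such commutators by a constant, which is exactly what makes both inequalities work.

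The reverse direction as you propose it fails. You want $\|g\|\leq C|\sigma(g)|$ (up to bounded error), i.e.\ an \emph{upper} bound on the word norm by a single quasimorphism. This is false: $[B_\infty,B_\infty]$ carries many independent homogeneous quasimorphisms that are Lipschitz with respect to $\|\cdot\|$ (this is essentially how Brandenbursky and K\c{e}dra prove stable unboundedness), so there are elements with $\sigma$ arbitrarily small in the homogenized sense but $\|\cdot\|$ arbitrarily large; a single quasimorphism can only ever give a lower bound for a conjugation-invariant norm, never an upper bound, unless the norm is entirely detected by it. The signature plays no role in the paper's proof of this theorem --- it is used only for stable unboundedness. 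Instead, the paper proves that \emph{both} norms have an ``extremal property'': each dominates, up to a multiplicative constant, every norm on $[B_\infty,B_\infty]$ invariant under conjugation by all of $B_\infty$ (for $\cl_{\nu_n,p,q}$ this is Proposition~\ref{extrB}, again via the displacement trick applied to commutators of elements conjugate into $B_n$; for $\|\cdot\|$ it is Proposition~\ref{extrC}, via the telescoping above). Since each of the two norms belongs to the class over which the other is extremal, each bounds the other, and equivalence follows. You should abandon the signature-based lower bound and look for this mutual-maximality argument.
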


\subsection*{Acknowledgement}
The author would like to thank Professor Takashi Tsuboi for his guidance and helpful advice. 
He also thanks to Morimichi Kawasaki for his useful advice.

\section{Proofs of main results}

\subsection{The construction of stably unbounded norms}

First, we explain the idea of Kawasaki. The definitions of $(\nu,p,q)$-commutator length and $\nu$-quasimorphisms are following.

\begin{defn}[\cite{kaw}]

Let $G$ be a group with a conjugation-invariant norm $\nu$ and $p,q \in \mathbb{R}_{>0}$ .
We define the \emph{$(\nu,p,q)$-commutator subgroup}
$[G,G]_{\nu,p,q}$ to be a subgroup of $G$ generated by the elements
$[f,g] \in G$ such that 
$\nu(f)\leq p$, $\nu(g)\leq q$.

We define \emph{ $(\nu,p,q)$-commutator length} $\cl_{\nu,p,q}:[G,G]_{\nu,p,q}\to \mathbb{R}_{\geq0}$ by
\[\cl_{\nu,p,q}(h)=
\min \left\{ k \; \middle| \; \begin{array}{ll}
\exists  f_i,\exists g_i\in G, \; (i=1,\ldots,k)\\
\nu(f_i)\leq p,\nu(g_i)\leq q
\end{array} 
, \; h=[f_1,g_1]\cdots[f_k,g_k]
  \right\}. \]
\end{defn}

\noindent
We note that $\cl_{\nu,p,q}$ is a conjugation-invariant norm.

\begin{defn}[\cite{kaw}]
Let $G$ be a group with a conjugation-invariant norm $\nu$. A function $\phi : G \to \mathbb{R}$ is called a \emph{$\nu$-quasimorphism} (or \emph{quasimorphism relative to $\nu$}) if there exists a constant $C>0$ such that
for every $f, g \in G$,
\[|\phi(fg)-\phi(f)-\phi(g)|\leq C \min\{\nu(f),\nu(g)\}.\]
\end{defn}

The concept of $\nu$-quasimorphisms appeared earlier in the paper of Entov and Polterovich  (\cite{ep}, Theorem 7.1.), which is called ``controlled quasi-additivity'' in \cite{ep}. 

Kawasaki proved the following proposition. 

\begin{prop}[\cite{kaw}, Proposition 3.4.]\label{kaw}
Let $\phi$ be a $\nu$-quasimorphism on $G$ and $p,q \in \mathbb{R}_{>0}$. 
If there exist an element $h \in [G,G]_{\nu,p,q}$ such that $\lim_{n\to \infty} \frac{\phi(h^n)}{n}>0$, then  $\cl_{\nu,p,q}$ is stably unbounded on $[G,G]_{\nu,p,q}$.
\end{prop}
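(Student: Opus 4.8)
The plan is to show that the stable value of $\phi$ on $[G,G]_{\nu,p,q}$ gives a lower bound on $\cl_{\nu,p,q}$, much as an ordinary homogeneous quasimorphism bounds ordinary commutator length in the Bavard-type argument. First I would establish the key estimate: if $h = [f_1,g_1]\cdots[f_k,g_k]$ with $\nu(f_i)\le p$ and $\nu(g_i)\le q$, then $|\phi(h)|$ is bounded above by a linear function of $k$. To do this I would iterate the defining inequality of a $\nu$-quasimorphism. Writing out $\phi([f,g])$ in terms of $\phi(fg)$, $\phi(gf)$, and so on, the defect $|\phi(ab)-\phi(a)-\phi(b)|\le C\min\{\nu(a),\nu(b)\}$ should yield a bound of the form $|\phi([f,g])|\le D$ for a single commutator with $\nu(f)\le p$, $\nu(g)\le q$, where $D$ depends only on $C$, $p$, and $q$. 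Then splitting the product $h=[f_1,g_1]\cdots[f_k,g_k]$ successively and applying the defect inequality at each of the $k-1$ multiplications gives $|\phi(h)| \le Ek$ for a suitable constant $E=E(C,p,q)$ (I would have to carry a possible additive constant like $\phi(e)$ as well, but this does not affect the linear growth).

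With the estimate $|\phi(h)|\le E\,\cl_{\nu,p,q}(h)$ in hand for every $h\in[G,G]_{\nu,p,q}$, I would apply it to the powers $h^n$ of the distinguished element. Since $\cl_{\nu,p,q}$ is a norm, it is subadditive under multiplication, so $\cl_{\nu,p,q}(h^n)\le n\,\cl_{\nu,p,q}(h)$; but I want a \emph{lower} bound, which is exactly where the quasimorphism estimate is used in reverse: $|\phi(h^n)|\le E\,\cl_{\nu,p,q}(h^n)$ gives
\[
\cl_{\nu,p,q}(h^n)\ \ge\ \frac{|\phi(h^n)|}{E}.
\]
Dividing by $n$ and letting $n\to\infty$, the hypothesis $\lim_{n\to\infty}\frac{\phi(h^n)}{n}>0$ forces $\lim_{n\to\infty}\frac{\cl_{\nu,p,q}(h^n)}{n}>0$, which is precisely the statement that $\cl_{\nu,p,q}$ is stably unbounded.

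The main obstacle I anticipate is the bookkeeping in the first step: turning the two-variable defect inequality into a clean linear-in-$k$ bound on $|\phi(h)|$. Because a $\nu$-quasimorphism is controlled by $\min\{\nu(f),\nu(g)\}$ rather than by a fixed constant, I must be careful that each factor $[f_i,g_i]$ really has controlled norm—this is guaranteed by the constraints $\nu(f_i)\le p$ and $\nu(g_i)\le q$ together with the conjugation-invariance and symmetry of $\nu$, so the minima appearing at each application stay uniformly bounded by $\max\{p,q\}$. I would also need to check that $\phi$ need not be assumed homogeneous: the argument only uses the defect inequality and the limit hypothesis, so replacing $\phi$ by its homogenization is optional and can be avoided. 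Verifying that every intermediate product $[f_1,g_1]\cdots[f_j,g_j]$ can be split off with a uniformly bounded defect is the crux, and once that is done the passage to the stable limit is immediate.
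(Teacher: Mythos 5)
Your proposal is correct and follows the standard Bavard-type argument: a uniform bound $|\phi([f,g])|\le D(C,p,q)$ for constrained commutators (using $\phi(e)=0$ and $\phi(g^{-1})\approx-\phi(g)$ from the defect inequality), followed by $k-1$ applications of the defect inequality to the product --- each controlled since $\nu([f_i,g_i])\le 2\min\{p,q\}$ by conjugation-invariance --- yields $|\phi(h)|\le E\,\cl_{\nu,p,q}(h)$, and the passage to the stable limit is then immediate. Note that the paper gives no proof of this proposition (it is quoted from Kawasaki's paper), but your argument is precisely the intended one, so there is nothing to compare beyond the minor remark that the relevant uniform bound on the minima is $2\min\{p,q\}$ rather than $\max\{p,q\}$, which does not affect the conclusion.
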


Next, we give an useful sufficient condition to prove that a function is a 
$q_K$-quasimorphism, where $q_K$ is a conjugation-invariant norm on a group $G$ which is normally generated by a subset $K\subset G$ defined by
\[q_K(f)= \min \{ l \mid \exists k_i\in K, \exists g_i \in G, \; f=k_1^{g_1}\cdots k_l^{g_l} \}, \] 
where $x^y$ denotes the conjugation $xyx^{-1}$.

\begin{lem}[\cite{ep}, Lemma 7.3.]\label{ep} 
Let $G$ be a group which is normally generated by a subset $K\subset G$, and
$\phi:G \to \mathbb{R}$ a function on $G$. If there exists a constant $C>0$ such that the inequality
$|\phi(gh)-\phi(g)-\phi(h)|\leq C$ holds 
for any $g\in G$ and any $h\in G$ with $q_K(h)=1$, then $\phi$ is a $q_K$-quasimorphism.
\end{lem}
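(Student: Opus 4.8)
The plan is to bootstrap from the single-generator estimate to a general bound on $|\phi(gh)-\phi(g)-\phi(h)|$ that is controlled by $\min\{q_K(g),q_K(h)\}$, which is exactly what it means for $\phi$ to be a $q_K$-quasimorphism. First I would fix notation: write $D(g,h) = \phi(gh)-\phi(g)-\phi(h)$ for the defect. The hypothesis gives $|D(g,h)| \le C$ whenever $q_K(h)=1$, i.e.\ whenever $h = k^{u}$ for some $k\in K$ and $u\in G$. The strategy is to peel off the normal generators of $h$ one at a time and sum the resulting single-step defects via a telescoping argument.

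The key computation is an induction on $m = q_K(h)$. Suppose $q_K(h) = m$, so we can write $h = h_1 h_2 \cdots h_m$ with each $q_K(h_i) = 1$. I would prove by induction on $m$ that $|D(g,h)| \le Cm$ for all $g\in G$. For the inductive step, set $h' = h_1\cdots h_{m-1}$ so that $h = h' h_m$ with $q_K(h_m)=1$. The telescoping identity I would use is
\begin{align*}
\phi(gh)-\phi(g)-\phi(h)
&= \bigl[\phi(gh'h_m)-\phi(gh')-\phi(h_m)\bigr] \\
&\quad + \bigl[\phi(gh')-\phi(g)-\phi(h')\bigr] \\
&\quad - \bigl[\phi(h'h_m)-\phi(h')-\phi(h_m)\bigr].
\end{align*}
The first bracket is $D(gh', h_m)$ with $q_K(h_m)=1$, hence bounded by $C$ in absolute value; the second is $D(g,h')$, bounded by $C(m-1)$ by the inductive hypothesis; the third is $D(h',h_m)$, again bounded by $C$ since $q_K(h_m)=1$. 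Adding, $|D(g,h)| \le C(m-1) + 2C$, which is not quite $Cm$; so I would instead arrange the induction to give $|D(g,h)| \le C\,q_K(h)$ by absorbing constants, or simply carry the cleaner bound $|D(g,h)| \le C(2q_K(h)-1)$, which still suffices. The cleanest route is to telescope directly: writing $h=h_1\cdots h_m$, express $\phi(gh_1\cdots h_m)-\phi(g)-\sum_i\phi(h_i)$ as a sum of $m$ one-step defects of the form $D(gh_1\cdots h_{j-1}, h_j)$, each bounded by $C$, giving $|\phi(gh)-\phi(g)-\sum_i\phi(h_i)| \le Cm$, and then controlling $|\sum_i\phi(h_i)-\phi(h)|$ by the same telescoping (another sum of $m-1$ defects, each a $q_K=1$ step). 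This yields a linear-in-$q_K(h)$ bound.

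Having obtained $|D(g,h)| \le C'\,q_K(h)$ for a suitable constant $C'$ (a small multiple of $C$), I would finish by symmetry. The defect satisfies $D(g,h) = D(h^{-1}(hg)h^{-1}\cdots$ — more directly, using $\phi(gh)-\phi(g)-\phi(h) = -\bigl(\phi(h^{-1}g^{-1})-\phi(h^{-1})-\phi(g^{-1})\bigr)$ up to controlled error, or simply repeating the peeling argument on the left factor $g$ instead of the right factor $h$, one gets the companion bound $|D(g,h)| \le C'\,q_K(g)$. Taking the minimum of the two bounds gives $|D(g,h)| \le C'\min\{q_K(g),q_K(h)\}$, which is precisely the $q_K$-quasimorphism inequality with constant $C'$.

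The main obstacle I anticipate is purely bookkeeping: making the telescoping sum line up so that every bracketed term is genuinely a one-step defect with a $q_K=1$ argument, and keeping track of the accumulated constant so that the final bound is honestly linear in $q_K$ rather than quadratic. The symmetry step also requires care, since $\phi$ is only assumed to be a real-valued function with no a priori relation between $\phi(g)$ and $\phi(g^{-1})$; the safe way is not to invoke an inverse identity but to rerun the peeling argument decomposing the left-hand factor, so that both one-sided linear bounds are established by the same mechanism before taking their minimum.
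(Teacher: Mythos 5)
The paper does not actually prove this lemma --- it is quoted from Entov--Polterovich --- so your argument has to stand on its own. The first half does: the right-hand telescoping $\phi(gh_1\cdots h_j)=\phi(gh_1\cdots h_{j-1})+\phi(h_j)+O(C)$, iterated, together with the same expansion of $\phi(h)=\phi(h_1\cdots h_m)$ itself, gives $|\phi(gh)-\phi(g)-\phi(h)|\leq (2q_K(h)-1)C$, and your worry about the constant becoming quadratic is resolved exactly as you describe.

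The gap is the ``symmetry'' step, and it is a real one. The hypothesis is genuinely one-sided: it controls $\phi(gh)-\phi(g)-\phi(h)$ only when the \emph{right} factor satisfies $q_K(h)=1$. If you ``rerun the peeling argument decomposing the left-hand factor'' $g=g_1\cdots g_m$, the one-step defects you produce have the form $\phi(g_j w)-\phi(g_j)-\phi(w)$ with the short element on the left, about which the hypothesis says nothing. The only way to move the $g_j$ to the right is to conjugate, e.g.\ $g_1\cdots g_m h=h\cdot g_1^{h^{-1}}\cdots g_m^{h^{-1}}$ (in the paper's convention $x^y=yxy^{-1}$), and then the right-hand telescoping leaves you with the terms $\phi(g_i^{h^{-1}})-\phi(g_i)$, which the hypothesis alone only bounds by $2Cq_K(h)$ each --- ruining linearity in $q_K(g)$. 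The missing ingredient is conjugation-invariance of $\phi$, which holds for the braid signature (the only $\phi$ this paper applies the lemma to) and is part of the Entov--Polterovich setting: if $\phi(\alpha^\beta)=\phi(\alpha)$, then $\phi(gh)=\phi(hg)$, hence $\phi(gh)-\phi(g)-\phi(h)=\phi(hg)-\phi(h)-\phi(g)$, and the already-established right-hand bound applied to the pair $(h,g)$ yields $(2q_K(g)-1)C$; taking the minimum of the two bounds gives the $q_K$-quasimorphism inequality. Without some such additional input, your second one-sided bound is not established, and indeed I do not see how to derive it from the stated hypothesis alone.
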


Finally, we construct the stably unbounded norms on $[B_\infty,B_\infty]$ by proving the signature of braids is a $\nu_n$-quasimorphism, where $\nu_n=q_{B_n}$.

We denote the $n$-braid group by $B_n$ and let 
$\sigma_1 , \ldots, \sigma_{n-1}$ be the standard Artin generators. 
For a braid $\alpha \in B_n$, we denote the closure of $\alpha$ by  $\widehat{\alpha}$. 
For a braid $\alpha \in B_n$, $\sigma(\alpha)$ is defined to be the signature $\sigma(\widehat{\alpha})$ of the link $\widehat{\alpha}$. 
We consider the standard inclusion $\iota_n : B_n \to B_{n+1}$, $\sigma_i \mapsto \sigma_i$
(i.e. $\iota_n$ is ``adding a trivial string'') and obtain the following sequence : $B_1 \subset B_2 \subset \cdots \subset B_n \subset \cdots$. 
We define the infinite braid group $B_{\infty}$ to be $\bigcup_{n=1}^{\infty} B_n$.
Since the signature of braids $\sigma : B_n \to \mathbb{R}$ and the inclusion $\iota_n : B_n \to B_{n+1}$ are compatible, 
i.e. $\sigma (\iota_n(\alpha)) = \sigma(\alpha)$ for $\alpha \in B_n$, $\sigma : B_\infty \to \mathbb{R}$ is well-defined. 

\begin{thm}\label{rel}
The signature of braids $\sigma : B_\infty \to \mathbb{R}$ is a $\nu_n$-quasimorphism.
\end{thm}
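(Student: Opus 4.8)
The plan is to verify the hypothesis of \lemref{ep} with $G=B_\infty$, $K=B_n$, $q_K=\nu_n$ and $\phi=\sigma$. Since the braid relation gives $\sigma_{i+1}=(\sigma_i\sigma_{i+1})\sigma_i(\sigma_i\sigma_{i+1})^{-1}$, every standard generator $\sigma_k$ is conjugate to $\sigma_1\in B_n$ (for $n\geq 2$), so $B_n$ normally generates $B_\infty$; hence $\nu_n$ is a genuine conjugation-invariant norm and \lemref{ep} applies once we produce a constant $C>0$ with $|\sigma(gh)-\sigma(g)-\sigma(h)|\leq C$ for every $g\in B_\infty$ and every $h$ with $\nu_n(h)=1$.

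First I would reduce to the case where the second factor already lies in $B_n$. If $\nu_n(h)=1$ then $h=w\beta w^{-1}$ for some $\beta\in B_n$ and $w\in B_\infty$. The signature of a braid depends only on its closure, and closures satisfy $\widehat{xy}\cong\widehat{yx}$; thus $\sigma$ is invariant under conjugation and under cyclic permutation of factors. Writing $g'=w^{-1}gw$, these invariances give $\sigma(h)=\sigma(\beta)$, $\sigma(g)=\sigma(g')$ and $\sigma(gh)=\sigma(g'\beta)$, so it suffices to bound
\[|\sigma(g'\beta)-\sigma(g')-\sigma(\beta)|\]
uniformly in $g'\in B_\infty$ and $\beta\in B_n$.

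The heart of the argument is a cobordism estimate. I would choose $M$ with $g',\beta\in B_M$ and $\beta\in B_n\subseteq B_M$, and compare the link $\widehat{g'\beta}$ with the split union $\widehat{g'}\sqcup\widehat\beta$. Because signature is additive under split union and $\beta$ considered in $B_M$ only adds trivial split unknots, we have $\sigma(\widehat{g'}\sqcup\widehat\beta)=\sigma(g')+\sigma(\beta)$. The two diagrams agree outside a neighbourhood of the first $n$ strands: since $\beta$ is supported on strands $1,\dots,n$, the remaining strands pass through the $\beta$-block trivially and their closure is unaffected. One can therefore pass from $\widehat{g'\beta}$ to $\widehat{g'}\sqcup\widehat\beta$ by cutting and recombining only the $n$ strands on which $\beta$ acts, realized by a cobordism consisting of at most $2n$ band (saddle) moves together with trivial births and deaths of split unknots. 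As a single band move changes the signature by at most $1$ while births and deaths leave it unchanged, we obtain $|\sigma(g'\beta)-\sigma(g')-\sigma(\beta)|\leq 2n$. Taking $C=2n$ and invoking \lemref{ep} completes the proof.

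The main obstacle is precisely the band count in the last step: one must exhibit a cobordism from $\widehat{g'\beta}$ to $\widehat{g'}\sqcup\widehat\beta$ whose number of saddles is bounded by a function of $n$ alone, \emph{independently} of the braid index $M$ of $g'$ and of the word length of $\beta$. This locality --- that only the $n$ strands in the support of $\beta$ require surgery --- is exactly what forces the defect to be controlled by $\nu_n$, and is the reason $\sigma$ is a $\nu_n$-quasimorphism although it is not an honest quasimorphism on $B_\infty$. Equivalently, via Gambaudo--Ghys one may identify the defect $\sigma(g'\beta)-\sigma(g')-\sigma(\beta)$ with the Meyer cocycle of the Burau representation at $-1$ evaluated on the images of $g'$ and $\beta$, and bound it by the rank of $\rho(\beta)-I\leq n-1$; the same locality is what makes this bound independent of $M$.
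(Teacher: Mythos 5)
Your proposal is correct and follows essentially the same route as the paper: reduce by conjugation-invariance of $\sigma$ to the case $\beta\in B_n$, then compare $\widehat{g'\beta}$ with the split union $\widehat{g'}\sqcup\widehat{\beta}$ by a cobordism whose signature-changing saddle moves are bounded in number by the support of $\beta$, the remaining components being split unknots that contribute nothing. The only difference is cosmetic: the paper counts $m$ saddles of which $m-n$ are connected sums with unknots, obtaining the constant $n$ rather than your $2n$, and either constant suffices for Lemma~\ref{ep}.
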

\begin{proof}
By Lemma \ref{ep}, 
it is sufficient to prove that $|\sigma(\alpha\beta)-\sigma(\alpha)-\sigma(\beta)|\leq n$ 
for $\alpha ,\beta \in B_\infty$, $\nu_n(\beta)=1$.
The assumption $\nu_n(\beta)=1$ implies the existence of a braid $\gamma \in B_\infty$ such that $\beta^\gamma \in B_n \subset B_\infty$.
Let $m$ be a natural number such that $\alpha^\gamma \in B_m$ ($m>n$). 
Since the signature of braids is conjugation-invariant, 
\[|\sigma(\alpha\beta)-\sigma(\alpha)-\sigma(\beta)|
=|\sigma(\alpha^\gamma \beta^\gamma )-\sigma(\alpha^\gamma )-\sigma(\beta^\gamma)|.\]

We obtain the link $\widehat{\alpha^\gamma \beta^\gamma}$ from 
$\widehat{\alpha^\gamma} \sqcup \widehat{\beta^\gamma}$ by taking saddle moves $m$ times. Note that the link $\widehat{\beta^\gamma}$ has $m-n$ unknot components since $\beta^\gamma$ has trivial strings after ($n+1$)-th one (Figure \ref{relative-quasi-morphism}). 
It is known that the signature changes at most $\pm$1 by one saddle move (see \cite{mur} for example). Since the signature does not changed by taking connected sum to an unknot, the signature changes at most $n$ by the $m$ times saddle moves. Hence 
\[|\sigma(\alpha^\gamma \beta^\gamma )-\sigma(\alpha^\gamma )-\sigma(\beta^\gamma)|=|\sigma(\widehat{\alpha^\gamma \beta^\gamma})-\sigma(\widehat{\alpha^\gamma} \sqcup \widehat{\beta^\gamma})|\leq n.\]

\end{proof}

\begin{figure}[h]
 \begin{center}
  \includegraphics[width=60mm]{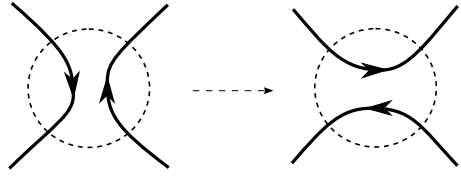}
 \end{center}
 \caption{saddle move}
 \label{saddle_move}
\end{figure}

\begin{figure}[h]
\begin{center}
  \includegraphics[width=100mm]{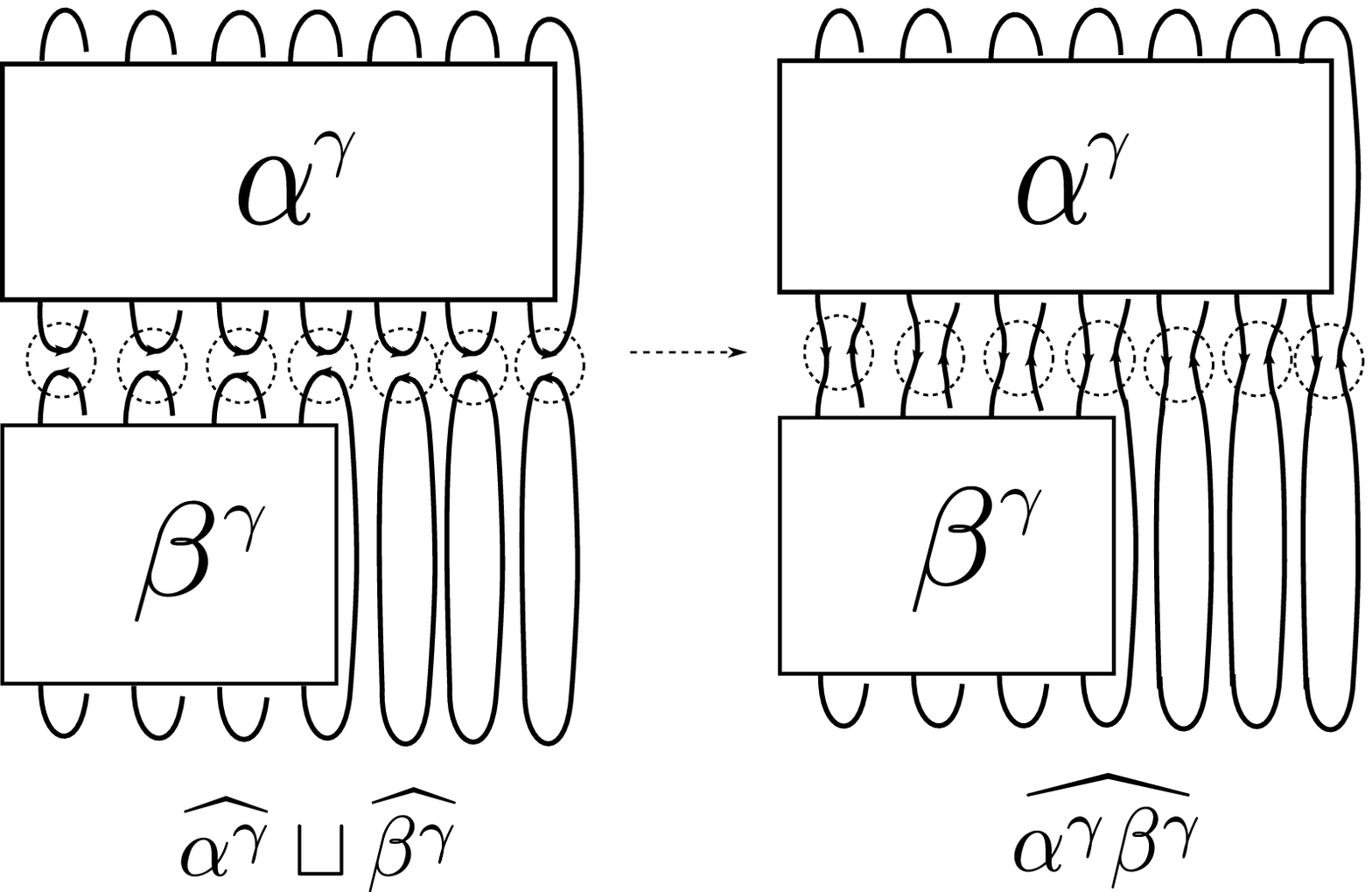}
 \end{center} 
 \caption{}
 \label{relative-quasi-morphism}
\end{figure}

Now we prove Theorem \ref{mainthm}.

\begin{lem}\label{welldef}
Let $G$ be a group normally generated by $K\subset G$ and $\nu$ a conjugation-invariant norm on $G$.
We assume that $\nu$ is bounded on $K$ (it is so when $K$ is finite). 
If $p,q\geq \sup_{k\in K}\nu(k)$, then $[G,G]_{\nu,p,q}=[G,G]$. 
\end{lem}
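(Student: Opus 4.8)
The plan is to prove the two inclusions separately. One direction, $[G,G]_{\nu,p,q}\subseteq[G,G]$, is immediate: every generator $[f,g]$ of the left-hand group is a commutator, so the subgroup they generate sits inside $[G,G]$. Everything therefore hinges on the reverse inclusion, and I would obtain it by showing that $N:=[G,G]_{\nu,p,q}$ is a normal subgroup of $G$ for which the quotient $G/N$ is abelian; since $G/N$ being abelian is equivalent to $[G,G]\subseteq N$, this closes the argument.

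Two elementary consequences of conjugation-invariance do the work. Set $M:=\sup_{k\in K}\nu(k)$, which is finite by assumption and satisfies $M\le p$ and $M\le q$. First, because $\nu$ is conjugation-invariant, any conjugate $gkg^{-1}$ of an element $k\in K$ has $\nu(gkg^{-1})=\nu(k)\le M\le\min\{p,q\}$; hence each such conjugate is small enough to serve simultaneously as the first entry (norm $\le p$) and the second entry (norm $\le q$) of a defining commutator of $N$. Second, $N$ is normal: for any $h\in G$ the identity $h[f,g]h^{-1}=[hfh^{-1},hgh^{-1}]$ together with $\nu(hfh^{-1})=\nu(f)$ and $\nu(hgh^{-1})=\nu(g)$ shows that conjugation carries each generator of $N$ to another generator, so it preserves $N$.

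Now I would exploit the normal generation. Let $S=\{\,gkg^{-1}\mid k\in K,\ g\in G\,\}$; because $G$ is normally generated by $K$, the set $S$ generates $G$ as a group. By the first observation, for any $s,t\in S$ we have $\nu(s)\le p$ and $\nu(t)\le q$, so $[s,t]\in N$. Passing to $G/N$, the images of all elements of $S$ commute pairwise; since these images generate $G/N$ and a group generated by pairwise commuting elements is abelian, $G/N$ is abelian. Thus $[G,G]\subseteq N$, and combined with the easy inclusion we conclude $[G,G]_{\nu,p,q}=[G,G]$.

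The only genuine subtlety, and the step I would watch most carefully, is the reduction from arbitrary commutators $[a,b]$---whose entries may have arbitrarily large $\nu$-norm---to commutators of the bounded conjugate-generators. The quotient-and-normality argument above is designed precisely to avoid the messy commutator-identity bookkeeping that a direct decomposition of $[a,b]$ would require; the price is that one must verify normality of $N$ and recall that normal generation by $K$ really does supply a group-theoretic generating set $S$. The hypothesis $p,q\ge\sup_{k\in K}\nu(k)$ enters exactly once, to guarantee $M\le\min\{p,q\}$, and it is this bound that makes every $[s,t]$ admissible.
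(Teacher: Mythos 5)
Your proof is correct, and it takes a genuinely different route from the paper's. The paper argues directly: writing the entries of a commutator as products of conjugates of elements of $K$ and repeatedly applying the identities $[ar,b]=[r,b]^a[a,b]$ and $[a,bs]=[a,b][a,s]^b$, it decomposes any element of $[G,G]$ into a product of commutators of the form $[k^{*},l^{*}]$ with $k,l\in K$; conjugation-invariance of $\nu$ then makes each factor an admissible generator of $[G,G]_{\nu,p,q}$. You instead prove the nontrivial inclusion abstractly: $N=[G,G]_{\nu,p,q}$ is normal because conjugation permutes its generators (again by conjugation-invariance of $\nu$), and $G/N$ is abelian because the conjugates of elements of $K$ generate $G$ and commute pairwise modulo $N$, whence $[G,G]\subseteq N$. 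Both arguments rest on exactly the same two inputs --- normal generation by $K$ and conjugation-invariance of $\nu$ --- but yours replaces the explicit commutator bookkeeping with the standard criterion that a normal subgroup with abelian quotient contains the commutator subgroup. The paper's computation has the mild advantage of being effective (it implicitly bounds the number of admissible commutators needed in terms of the $q_K$-lengths of the entries, in the quantitative spirit of the rest of the paper), whereas your argument is shorter and harder to get wrong; for the lemma as stated, either suffices.
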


\begin{proof}
For $a,b,r,s \in G$, the following equalities hold:
\[ [ar,b] = [r,b]^a[a,b] , \quad [a,bs] = [a,b] [a,s]^b. \]
By using them, we can represent a element $f $ of $[G,G]$ as a product of commutators of the form $[k^*,l^*]$ ($k, l  \in K$). 
Therefore $f \in [G,G]_{\nu,p,q}$ if $p,q\geq \sup_{k\in K}\nu(k)$. 
\end{proof}

Since $B_\infty$ is normally generated by $\{\sigma_1^{\pm1}\}$, we have the following corollary.

\begin{cor}\label{well-def}
For $n\geq 2$ and $p,q\geq1$, $\cl_{\nu_n ,p,q}$ is well-defined on $[B_\infty,B_\infty]$.
\end{cor}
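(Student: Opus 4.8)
The plan is to deduce this directly from Lemma \ref{welldef}, applied to the group $G = B_\infty$ with normally generating subset $K = \{\sigma_1^{\pm 1}\}$ and conjugation-invariant norm $\nu = \nu_n = q_{B_n}$. The statement ``$\cl_{\nu_n,p,q}$ is well-defined on $[B_\infty,B_\infty]$'' should be read as the assertion that the domain $[B_\infty,B_\infty]_{\nu_n,p,q}$ of this norm is in fact all of $[B_\infty,B_\infty]$; this is precisely the conclusion that Lemma \ref{welldef} delivers once its hypotheses are checked.

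The only point requiring verification is the value of $\sup_{k\in K}\nu_n(k)$, and this is where the hypothesis $n \geq 2$ enters. First I would note that $\nu_n$ is by construction a conjugation-invariant norm, and that it is bounded on the finite set $K = \{\sigma_1^{\pm 1}\}$, as remarked in the lemma. Next I would compute the supremum: since $n \geq 2$, the generator $\sigma_1$ lies in $B_2 \subseteq B_n$, so both $\sigma_1$ and $\sigma_1^{-1}$ are non-identity elements of $B_n$. By the definition of $\nu_n = q_{B_n}$, any non-identity element $f \in B_n$ satisfies $\nu_n(f) = 1$, because $f = f^e$ exhibits $f$ as a single conjugate (by the identity) of an element of the generating set $B_n$, while $f \neq e$ forces $\nu_n(f) \geq 1$. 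Hence $\nu_n(\sigma_1) = \nu_n(\sigma_1^{-1}) = 1$ and $\sup_{k\in K}\nu_n(k) = 1$.

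With this computation in hand, the hypothesis $p,q \geq \sup_{k\in K}\nu_n(k) = 1$ of Lemma \ref{welldef} is exactly the assumption $p,q \geq 1$, so the lemma yields $[B_\infty,B_\infty]_{\nu_n,p,q} = [B_\infty,B_\infty]$, and the corollary follows. I expect no genuine obstacle here: the content is entirely contained in Lemma \ref{welldef}, and the single nontrivial observation is that $n \geq 2$ guarantees $\sigma_1 \in B_n$, which pins the normal generators at $\nu_n$-norm $1$. If I wanted to be completely explicit I could instead argue that Lemma \ref{welldef} allows the generating set $B_n$ itself in place of $\{\sigma_1^{\pm 1}\}$ with the same bound, but using the two-element set $\{\sigma_1^{\pm 1}\}$ keeps the supremum computation trivial.
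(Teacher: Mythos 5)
Your proposal is correct and matches the paper's argument exactly: the paper likewise deduces the corollary from Lemma~\ref{welldef} applied to $K=\{\sigma_1^{\pm1}\}$, which normally generates $B_\infty$. Your explicit computation that $n\geq 2$ forces $\sigma_1\in B_n$ and hence $\nu_n(\sigma_1^{\pm1})=1$ is the (implicit) content of the paper's one-line justification.
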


\noindent
\emph{(Proof of Theorem \ref{mainthm})} 
We apply Proposition \ref{kaw} to the signature $\sigma : B_\infty \to \mathbb{R}$. 
Since  $\sigma$ is a $\nu_n$-quasimorphism by Theorem \ref{rel}, 
it is sufficient to see that the stabilization of $\sigma$ is non-trivial and it is already known (see \cite{bk} for example). Therefore, for  $n\geq 2$ and $p,q\geq1$, $\cl_{\nu_n ,p,q}$ is a stably unbounded norm on $[B_\infty,B_\infty]_{\nu_n,p,q}=[B_\infty,B_\infty]$ by Corollary \ref{well-def}. \qed

\subsection{The extremal property}

We study the properties of the norms $\cl_{\nu_n,p,q}$. 
Two norms on a group are called \textit{equivalent} if their ratio is bounded away from 0 and $\infty$. First, we will show that the norms $\cl_{\nu_n,p,q}$ are equivalent each other. It follows from the fact that the norms have an ``extremal property''.

\begin{lem}\label{conj}
For $x,y,z \in G$, assume that $x$ and $y^z$ commute. 
Then $[x,y]$ is written as the products of 4 conjugates of $z$ or $z^{-1}$.
\end{lem}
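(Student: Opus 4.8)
The plan is to prove this by a single direct computation, using the commutativity hypothesis exactly once and introducing the conjugating elements by inserting the identity in the right places. First I would rephrase the hypothesis. Setting $w := y^z = zyz^{-1}$, the assumption that $x$ commutes with $y^z$ becomes simply $xw = wx$, and we have $y = z^{-1}wz$. Substituting this expression for $y$ into the commutator turns the problem into one about a single word in $x,w,z$:
\[ [x,y] = x\,(z^{-1}wz)\,x^{-1}\,(z^{-1}wz)^{-1} = x z^{-1} w z x^{-1} z^{-1} w^{-1} z. \]
So the task reduces to splitting this word into four conjugates of $z^{\pm1}$.

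The key manipulation is to peel off conjugates one at a time by inserting $x^{-1}x$ and $w^{-1}w$ at two carefully chosen spots, using $xw=wx$ once to move $x$ past $w$. Inserting $x^{-1}x$ just after the first $z^{-1}$ splits off the leading factor $x z^{-1} x^{-1}$; commuting $x$ past $w$ and then inserting $w^{-1}w$ after the interior $x^{-1}$ separates the remaining terms. I expect this to yield the factorization
\[ [x,y] = (x z^{-1} x^{-1})\,(w x\, z\, x^{-1} w^{-1})\,(w z^{-1} w^{-1})\,z, \]
in which each of the four factors is visibly a conjugate of $z$ or of $z^{-1}$ — by $x$, by $wx$, by $w$, and by the identity, respectively — which is exactly the desired conclusion since $w$ is itself just an element of $G$.

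To be safe I would confirm the identity by expanding the right-hand side: the adjacent pair $w^{-1}w$ cancels, and the relations $x^{-1}wx = w$ (commutativity) and $z^{-1}wz = y$ (the definition of $w$) collapse the product back to $xyx^{-1}y^{-1}$. There is no real conceptual obstacle here; the only thing to get right is the bookkeeping, namely choosing the insertion points so that the conjugating elements of adjacent factors telescope correctly. Once $w$ is defined and the substitution $y=z^{-1}wz$ is made, the hypothesis $xw=wx$ is precisely what makes the cross-terms vanish, so no additional assumptions are needed.
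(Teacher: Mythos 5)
Your computation is correct: setting $w=y^z$, the factorization
\[
[x,y] \;=\; (x z^{-1} x^{-1})\,\bigl((wx)\, z\, (wx)^{-1}\bigr)\,(w z^{-1} w^{-1})\, z
\]
does hold --- the $w^{-1}w$ cancellation followed by the single use of $x^{-1}wx=w$ collapses the right-hand side to $xz^{-1}wzx^{-1}z^{-1}w^{-1}z = xyx^{-1}y^{-1}$ --- and each factor is visibly a conjugate of $z^{\pm1}$. However, this is a genuinely different decomposition from the paper's. The paper first proves the identity $[x,y]=[x,[y,z]]$ (the hypothesis makes $x$ commute with $y^z$ and hence with $(y^{-1})^z$, so the extra factors cancel), and then expands using $[y,z]=z^{y}z^{-1}$ to obtain $[x,y]=z^{xy}(z^{-1})^{x}\,z\,(z^{-1})^{y}$, a product of four conjugates with conjugators $xy$, $x$, $1$, $y$; your conjugators are $x$, $y^{z}x$, $y^{z}$, $1$, so the two factorizations are not the same one in disguise. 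The paper's route makes the number four structural rather than accidental: $[y,z]$ is a product of two conjugates of $z^{\pm1}$, so any commutator $[x,[y,z]]$ is automatically a product of four, and the only real content is the reduction $[x,y]=[x,[y,z]]$. Your version trades that conceptual step for direct bookkeeping --- slightly quicker to verify, but it hides where the count comes from. Both arguments use the commutativity hypothesis exactly once and both are valid proofs of the lemma.
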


\begin{proof}
By the assumption, $[x,[y,z]]=xy(y^{-1})^z x^{-1} y^z y^{-1}=xyx^{-1}y^{-1}=[x,y]$.  
Thus $[x, y]=[x,[y,z]]=x z^y z^{-1} x^{-1}z(z^{-1})^y=z^{xy}(z^{-1})^x z (z^{-1})^y$. 
\end{proof}

We call that a norm $\mu$ on $[B_\infty, B_\infty]$ \emph{has the extremal property} if $\mu$ satisfies the following condition: 
for any conjugation-invariant norm $\nu$ on $[B_\infty, B_\infty]$ which satisfies $\nu(\alpha^\beta)=\nu(\alpha)$ for all $\alpha \in [B_\infty,B_\infty]$ and $\beta \in B_\infty$ (not only $[B_\infty,B_\infty]$), there exists a positive number $\lambda>0$ such that $\nu \leq \lambda \mu $.

By using above lemma, we observe that the norms $ \cl_{\nu_n,p,q} $ have the property.

\begin{prop}\label{extrB}
For $n\geq 2$ and $p,q\geq1$, $\cl_{\nu_n,p,q} $ has the extremal property.
\end{prop}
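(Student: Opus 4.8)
The plan is to establish the stronger statement that $\nu$ is uniformly bounded on the generators of the unit ball of $\cl_{\nu_n,p,q}$, that is, to produce a constant $\lambda_0$ (depending on $n,p,q$ and on $\nu$, but not on the elements) with $\nu([f,g])\le\lambda_0$ whenever $\nu_n(f)\le p$ and $\nu_n(g)\le q$. Granting this, the triangle inequality for $\nu$ applied to a minimal expression $h=[f_1,g_1]\cdots[f_k,g_k]$ realizing $k=\cl_{\nu_n,p,q}(h)$ gives $\nu(h)\le\sum_i\nu([f_i,g_i])\le\lambda_0\,\cl_{\nu_n,p,q}(h)$ for every $h\in[B_\infty,B_\infty]$ (which equals $[B_\infty,B_\infty]_{\nu_n,p,q}$ by Corollary \ref{well-def}), i.e. $\nu\le\lambda_0\mu$. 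So everything reduces to a uniform bound on a single generating commutator.

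The first key step would be to exploit that such $f$ and $g$ have uniformly bounded support. Since $\nu_n=q_{B_n}$ is integer valued, $\nu_n(f)\le p$ means that $f$ is a product of at most $\lfloor p\rfloor$ conjugates of elements of $B_n$, hence supported on at most $\lfloor p\rfloor n$ strands; likewise $g$ is supported on at most $\lfloor q\rfloor n$ strands. Thus the \emph{joint} support $S$ of $f$ and $g$ has at most $N:=(\lfloor p\rfloor+\lfloor q\rfloor)n$ strands, a number depending only on $n,p,q$. Choosing a combing braid $u\in B_\infty$ that carries the block $S$ onto $\{1,\dots,|S|\}$, conjugation by $u$ sends the subgroup of braids supported on $S$ isomorphically onto $B_{|S|}\subseteq B_M$, where $M:=N+1$. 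Because $\nu$ is invariant under conjugation by \emph{all} of $B_\infty$, we have $\nu([f,g])=\nu([f^u,g^u])$ with $f^u,g^u\in B_M$; so we may assume from the start that $f,g\in B_M$ for this fixed $M=M(n,p,q)$.

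The final step is to feed a \emph{fixed} separating element into \lemref{conj}. Let $w\in B_\infty$ be a block-transposition braid with $w\sigma_i w^{-1}=\sigma_{i+M}$ for $1\le i\le M-1$, so that $B_M^{\,w}=\langle\sigma_{M+1},\dots,\sigma_{2M-1}\rangle$, a subgroup commuting with $B_M$; after multiplying $w$ by a suitable pure braid supported on strands beyond $2M$ I can arrange the exponent sum to vanish, so that $w\in[B_\infty,B_\infty]$ while still $B_M^{\,w}\subseteq\langle\sigma_{M+1},\sigma_{M+2},\dots\rangle$. For $f,g\in B_M$ the element $g^{w}$ then commutes with $f$, so \lemref{conj} expresses $[f,g]$ as a product of four conjugates of $w^{\pm1}$. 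Since $\nu$ is conjugation invariant and $w\in[B_\infty,B_\infty]$, this gives $\nu([f,g])\le 4\nu(w)$, and $\lambda_0:=4\nu(w)$ is independent of $f$ and $g$. This is the desired uniform bound.

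The main obstacle is this last construction, together with checking that it is legitimate. One must produce a single $w$, depending only on $M$, that conjugates all of $B_M$ into a subgroup commuting with $B_M$ and at the same time lies in $[B_\infty,B_\infty]$; the exponent-sum correction is precisely what makes $\nu(w)$ defined, and it is essential that the hypothesis on $\nu$ is invariance under the \emph{whole} group $B_\infty$, since both the gathering braid $u$ and the four conjugating elements supplied by \lemref{conj} lie in $B_\infty$ but in general not in $[B_\infty,B_\infty]$. The support-gathering step is what tames the a priori unbounded geometry of $f$ and $g$ down to the fixed group $B_M$; without it one is forced to separate $f$ from $g$ by braids of unbounded complexity, whose $\nu$-values need not be bounded, and the argument collapses.
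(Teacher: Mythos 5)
Your overall architecture is the right one and matches the paper's: reduce the extremal property to a uniform bound $\nu([f,g])\le\lambda_0$ on the generating commutators, and obtain that bound by feeding a separating conjugator $z$ into \lemref{conj} so that $[f,g]$ becomes a product of four conjugates of $z^{\pm1}$ and $\nu([f,g])\le 4\nu(z)$. The problem is the step you yourself flag as the crux: the ``support-gathering'' reduction to a fixed $B_M$. The inference ``$f$ is a product of at most $\lfloor p\rfloor$ conjugates of elements of $B_n$, hence supported on at most $\lfloor p\rfloor n$ strands'' is a non sequitur. A conjugate $k^{g}$ of $k\in B_n$ is supported on the image of the first $n$ strands under an arbitrary braid $g$ --- a wildly embedded $n$-punctured disk --- not on $n$ of the standard strands; a product of several such factors, and a fortiori the pair $(f,g)$, need not be (simultaneously) conjugate into any $B_M$ with $M$ depending only on $n,p,q$. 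Simultaneous conjugacy of a pair is governed by the conjugacy class of words in $f$ and $g$ (e.g.\ of $[f,g]$ itself), which is not controlled by the individual conjugacy classes; nothing in the hypothesis $\nu_n(f)\le p$, $\nu_n(g)\le q$ tames it. Since, as you say, without this step ``the argument collapses,'' the proposal as written has a genuine gap exactly where the real work lies.

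What the paper does instead is instructive. It first reduces to $p=q=1$ via the elementary inequality $\cl_{\nu_n,1,1}\le pq\,\cl_{\nu_n,p,q}$ (using the commutator identities of Lemma \ref{welldef}), so one only needs to bound $\nu([\alpha,\beta])$ when $\alpha$ and $\beta$ are each \emph{individually} conjugate into $B_n$. It then conjugates only $\beta$ into $B_n$ (legitimate for a single element of $\nu_n$-norm one), accepts that the other factor $\widetilde{\alpha}^{\gamma'\gamma^{-1}}$ lives in some $B_m$ with $m$ depending on $\alpha,\beta$ and a priori unbounded, and separates using a braid $\Delta'_m$ that \emph{does} depend on $m$ --- but which is, by its very definition $\Delta'_m=A'_{n,m-1}\cdots A'_{n,1}\cdots A'^{-1}_{n,m-1}$, an explicit conjugate of the single fixed commutator argyle braid $A'_{n,1}$. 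Hence $\nu(\Delta'_m)=\nu(A'_{n,1})$ for every $m$, and the uniform constant is $4\nu(A'_{n,1})$. So the unbounded geometry is not tamed by shrinking supports (which is impossible); it is absorbed into a family of separating elements all lying in one conjugacy class. If you want to repair your write-up, replace the gathering step by this device: your shift braid $w$ must be allowed to shift past an $m$ depending on the pair, and you must then argue that all these shifts can be chosen conjugate to one fixed element of $[B_\infty,B_\infty]$.
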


\begin{proof}
First we prove that $\cl_{\nu_n,1,1}$ has the extremal property. 
Let $\alpha ,\beta \in B_\infty$ satisfy $\nu_n(\alpha)=\nu_n(\beta)=1$. 
Then there exists $\gamma,\gamma' \in B_\infty$ such that  $\widetilde{\alpha}:=\alpha^\gamma\in B_n$, 
$\widetilde{\beta}:=\beta^{\gamma'}\in B_n$. 
If $m$ is sufficiently large, $\widetilde{\alpha}^{\gamma' \gamma^{-1}}$ and 
$\widetilde{\beta}^{\Delta'_m} $ commute, where
\[\Delta'_m =  A'_{n,m-1}\cdots A'_{n,2} A'_{n,1} A_{n,2}^{\prime^{-1}} \cdots A^{\prime^{-1}}_{n,m-1}\]
and $A'_{n,i}$ is a \textit{commutator argyle braid} appeared in \cite{bk} .
By Lemma \ref{conj}, 
$[\alpha,\beta]^{\gamma'}=[\alpha^{\gamma'},\beta^{\gamma'}]
=[\widetilde{\alpha}^{\gamma' \gamma^{-1}} , \widetilde{\beta} ]$ 
is written as the products of 4 conjugates of $\Delta_{m}^{\prime^{\pm1}}$. 
Thus $\nu([\alpha,\beta])\leq 4\nu(\Delta'_m) $. 
Since $\Delta'_m$ is conjugate to $A'_{n,1}$, $\nu(\Delta'_m)=\nu(A'_{n,1})$.
Then we obtain $\nu \leq 4 \nu(A'_{n,1})  \cl_{\nu_n,1,1}$. 
Since $\cl_{\nu_n,1,1}\leq pq\cl_{\nu_n, p,q}$, the proposition follows.
\end{proof}

Since  $\cl_{\nu_n,p,q}(\alpha^\beta)=\cl_{\nu_n,p,q}(\alpha)$ for all $\alpha \in [B_\infty,B_\infty]$ and $\beta \in B_\infty$, it follows from Proposition \ref{extrB} that the norms $\cl_{\nu_n,p,q}$ are equivalent to each other. 
\medskip

Next, we also consider the property of the biinvariant word norm $\|\cdot\|:=q_{\{\sigma_1^{\pm 1}\}}$.

\begin{prop}\label{extrC}
The biinvariant word norm $\|\cdot\|$ has the extremal property.
\end{prop}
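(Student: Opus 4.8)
The plan is to produce a single constant $\lambda_0>0$ with $\nu([\sigma_1,h])\le\lambda_0$ for \emph{every} $h\in B_\infty$, and then to bound $\nu$ on an arbitrary $f\in[B_\infty,B_\infty]$ by $\lambda_0\,\|f\|$ by telescoping a $\|\cdot\|$-minimal word for $f$. The decisive hypothesis I would lean on is that the comparison norm $\nu$ is invariant under conjugation by \emph{all} of $B_\infty$: this makes $\nu$ insensitive both to the conjugating factors in a minimal word of $\sigma_1^{\pm1}$-conjugates and to the (arbitrarily large) conjugators supplied by the Brandenbursky--K\c{e}dra argyle braids.

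\textbf{Step 1 (telescoping).} Writing $f=a_1\cdots a_l$ with $l=\|f\|$ and $a_i=(\sigma_1^{\epsilon_i})^{g_i}$, I would set $s_i=\epsilon_1+\cdots+\epsilon_i$ and $c_i=a_1\cdots a_i\,\sigma_1^{-s_i}$. Since $f\in[B_\infty,B_\infty]$ one has $s_l=0$, so $c_0=1$ and $c_l=f$; moreover every $c_i$ has trivial exponent sum and hence lies in $[B_\infty,B_\infty]$. A short computation gives $c_{i-1}^{-1}c_i=[g_i,\sigma_1^{\epsilon_i}]^{\sigma_1^{\,s_{i-1}}}$, a $B_\infty$-conjugate of the commutator $[g_i,\sigma_1^{\epsilon_i}]$. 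Telescoping $f=\prod_{i=1}^{l}c_{i-1}^{-1}c_i$ and using subadditivity together with the $B_\infty$-invariance of $\nu$, I obtain $\nu(f)\le\sum_{i=1}^{l}\nu\bigl([\sigma_1^{\epsilon_i},g_i]\bigr)$, where I have also used $\nu(w)=\nu(w^{-1})$.

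\textbf{Step 2 (the uniform bound).} Fix $h\in B_\infty$ supported on the first $m$ strands. Exactly as in the proof of Proposition \ref{extrB}, I would choose the commutator argyle braid $\Delta'_M\in[B_\infty,B_\infty]$ with $M$ large enough that $h^{\Delta'_M}$ (or $h^{(\Delta'_M)^{-1}}$) is supported away from the first two strands, and therefore commutes with $\sigma_1$. Applying Lemma \ref{conj} with $x=\sigma_1$, $y=h$, $z=\Delta'_M$ then exhibits $[\sigma_1,h]$ as a product of four $B_\infty$-conjugates of $(\Delta'_M)^{\pm1}$, so that $\nu([\sigma_1,h])\le 4\,\nu(\Delta'_M)$. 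Because $\Delta'_M$ is conjugate to the fixed braid $A'_{n,1}$ for every $M$, we have $\nu(\Delta'_M)=\nu(A'_{n,1})$, whence $\nu([\sigma_1,h])\le 4\,\nu(A'_{n,1})=:\lambda_0$ independently of $h$; the identical argument with $\sigma_1^{-1}$ in place of $\sigma_1$ bounds $\nu([\sigma_1^{-1},h])$ by $\lambda_0$ as well. Combining with Step 1 yields $\nu(f)\le\lambda_0\,\|f\|$, that is, $\nu\le\lambda_0\,\|\cdot\|$.

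\textbf{Main obstacle.} The essential point—and the only step drawing genuine input from \cite{bk}—is the uniformity in Step 2. Since the support of $h$ is unbounded, the shift distance is forced to grow, $M\to\infty$, yet all the braids $\Delta'_M$ must remain conjugate to a single fixed element $A'_{n,1}$ so that $\nu(\Delta'_M)$ stays constant. This is precisely the infinite-swindle feature of the Brandenbursky--K\c{e}dra argyle braids already used in Proposition \ref{extrB} for braids of \emph{bounded} support; the work to be done is to verify that it persists when $h$ ranges over all of $B_\infty$ rather than a fixed $B_n$. Granting this, fixing any $n\ge 2$ furnishes the admissible constant $\lambda=4\,\nu(A'_{n,1})$, completing the proof.
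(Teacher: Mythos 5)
Your Step 1 is sound and is essentially the paper's own decomposition written as a telescoping product: both express $f$ as a product of $\|f\|$ conjugates of commutators of the form $[\,\cdot\,,\sigma_1^{\pm1}]$, and the hypothesis that $\nu$ is invariant under conjugation by all of $B_\infty$ disposes of the conjugating factors. The gap is in Step 2, and it sits exactly where you flag ``the work to be done.'' You apply Lemma \ref{conj} with $x=\sigma_1$, $y=h$, $z=\Delta'_M$, so the hypothesis you must verify is that $\sigma_1$ commutes with $h^{\Delta'_M}$ --- that is, you must shift the \emph{arbitrary} braid $h$ off the first two strands. But the conjugator $\Delta'_M$ built from the braids $A'_{n,i}$ only implements a clean shift of the block $B_n$ of the first $n$ strands onto a far-away block; on a braid supported on $m>n$ strands it does nothing useful. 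To move such an $h$ away from strands $1,2$ you would need to take the block size $n$ at least as large as the support of $h$, and then your constant $4\nu(A'_{n,1})$ depends on $h$ through $n$. So the real obstacle is not that $M\to\infty$ (that is harmless: $\Delta'_M$ is conjugate to $A'_{n,1}$ for every $M$, so $\nu(\Delta'_M)$ is constant in $M$) but that the reference braid $A'_{n,1}$ itself would have to change with $h$; the uniformity does not ``persist'' in the form you set it up.

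The fix --- and what the paper actually does --- is to reverse the roles in Lemma \ref{conj}: bound $[h,\sigma_1^{\varepsilon}]=[\sigma_1^{\varepsilon},h]^{-1}$ by taking $x=h$, $y=\sigma_1^{\varepsilon}$, $z=\Delta'_M$ with $\Delta'_M$ built from the $n=2$ argyle braids. The hypothesis becomes that $h$ commutes with $(\sigma_1^{\varepsilon})^{\Delta'_M}$, which holds for $M$ large because the element being shifted is now the \emph{fixed} two-strand braid $\sigma_1^{\varepsilon}$, pushed to a block of two strands beyond the support of $h$. This gives $\nu([h,\sigma_1^{\varepsilon}])\leq 4\nu(\Delta'_M)=4\nu(A'_{2,1})$ uniformly in $h$, and combined with your Step 1 yields $\nu\leq 4\nu(A'_{2,1})\,\|\cdot\|$ as required.
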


\begin{proof}
Let $\alpha \in [B_\infty,B_\infty]$ with $\| \alpha \|=k$. Then $\alpha$ is written as follows:
\begin{align*}
\alpha =& (\sigma_1^{\varepsilon_1})^{\alpha_1} (\sigma_1^{\varepsilon_2})^{\alpha_2}\cdots
 (\sigma_1^{\varepsilon_k})^{\alpha_k} \\
=& [\alpha_1, \sigma_1^{\varepsilon_1}] \sigma_1^{\varepsilon_1}
[\alpha_2, \sigma_1^{\varepsilon_2}] \sigma_1^{\varepsilon_2}
\cdots[\alpha_k, \sigma_1^{\varepsilon_k}] \sigma_1^{\varepsilon_k} \\
=& [\alpha_1, \sigma_1^{\varepsilon_1}] [\alpha_2, \sigma_1^{\varepsilon_2}]^{\sigma_1^{\varepsilon_1}} \sigma_1^{\varepsilon_1+\varepsilon_2}
\cdots[\alpha_k, \sigma_1^{\varepsilon_k}] \sigma_1^{\varepsilon_k} \\
=& \cdots \\
=& [\alpha_1, \sigma_1^{\varepsilon_1}] [\alpha_2, \sigma_1^{\varepsilon_2}]^{\ast}
\cdots[\alpha_k, \sigma_1^{\varepsilon_k}]^{\ast} \sigma_1^{\varepsilon_1+\cdots+\varepsilon_k},
\end{align*}
where $\varepsilon_i \in \{\pm1\}$ and $\alpha_i \in B_\infty$. 
Since $\alpha \in [B_\infty,B_\infty]$, $\varepsilon_1+\cdots+\varepsilon_k=0$. 
Thus $\alpha$ is written as a product of $k$ commutators of the form $[*, \sigma_1^{\pm1}]^*$. Since $\alpha_i$ and $(\sigma_1^{\varepsilon_i})^{\Delta'_m}$  commute in $B_\infty$ for a sufficient large $m$, $\nu([\alpha_i , \sigma_1^{\varepsilon_i}]^*)=\nu([\alpha_i , \sigma_1^{\varepsilon_i}])\leq 4\nu(\Delta'_m)=4\nu(A'_{2,1})$ and it follows that 
$\nu(\alpha)\leq 4\nu(A'_{2,1}) k=4\nu(A'_{2,1}) \|\alpha\|$.
\end{proof}

As a corollary of Proposition \ref{extrB} and \ref{extrC}, it follows that the norm $\|\cdot\|$ is also equivalent to the norms $\cl_{\nu_n,p,q}$ we constructed (Theorem \ref{equiv2}) .

\end{document}